\date{}
\theoremstyle{plain}
\newtheorem{theorem}{Theorem}
\newtheorem{lemma}{Lemma}
\theoremstyle{definition}
\newtheorem{definition}{Definition}
\numberwithin{equation}{section}
\numberwithin{theorem}{section}
\numberwithin{proposition}{section}
\numberwithin{lemma}{section}
\numberwithin{corollary}{section}
\numberwithin{definition}{section}
\numberwithin{remark}{section}
\newcommand{\R}{\mathbb{R}}
\newcommand{\N}{\mathbb{N}}
\newcommand{\Z}{\mathbb{Z}}
\newcommand{\Q}{\mathbb{Q}}
\newcommand{\D}{\mathcal{D}}
\newcommand{\esslim}{\operatornamewithlimits{ess\,lim}}
\newcommand{\sign}{\operatorname{sign}}
\newcommand{\meas}{\operatorname{meas}}
\newcommand{\Cl}{\operatorname{Cl}}
\newcommand{\Int}{\operatorname{Int}}
\newcommand{\const}{\mathrm{const}}
\renewcommand{\div}{\operatorname{div}}
\begin{document}
\title{On decay of entropy solutions to multidimensional conservation laws}

\author{Evgeny Yu. Panov}
\maketitle
\begin{abstract}
Under a precise genuine nonlinearity assumption we establish the decay of entropy solutions of a multidimensional scalar conservation law with merely continuous flux.
\end{abstract}

\section{Introduction}
In the half-space $\Pi=\R_+\times\R^n$, $\R_+=(0,+\infty)$, we consider a first order multidimensional
conservation law
\begin{equation}\label{1}
u_t+\div_x\varphi(u)=0,
\end{equation}
where the flux vector $\varphi(u)$ is supposed to be only continuous: $\varphi(u)=(\varphi_1(u),\ldots,\varphi_n(u))\in C(\R,\R^n)$.
Recall the notion of entropy solution to the Cauchy problem for equation (\ref{1}) with initial condition
\begin{equation}\label{2}
u(0,x)=u_0(x)\in L^\infty(\R^n)
\end{equation}
in the sense of S.N.~Kruzhkov \cite{Kr}.

\begin{definition}\label{def1}
A bounded measurable function $u=u(t,x)\in L^\infty(\Pi)$ is called an entropy solution (e.s. for
short) of (\ref{1}), (\ref{2}) if for all $k\in\R$
\begin{equation}\label{entr}
|u-k|_t+\div_x[\sign(u-k)(\varphi(u)-\varphi(k))]\le 0
\end{equation}
in the sense of distributions on $\Pi$ (in $\D'(\Pi)$), and
\begin{equation}\label{ini}
\esslim_{t\to 0+} u(t,\cdot)=u_0 \mbox{ in } L^1_{loc}(\R^n).
\end{equation}
\end{definition}
Condition (\ref{entr}) means that for all non-negative test functions $f=f(t,x)\in C_0^1(\Pi)$
$$
\int_\Pi [|u-k|f_t+\sign(u-k)(\varphi(u)-\varphi(k))\cdot\nabla_xf]dtdx\ge 0
$$
(here ``$\cdot$'' denotes the inner product in $\R^n$).

As was established in \cite[Corollary~7.1]{PaJHDE}, after possible correction on a set of null
measure, an e.s. $u(t,x)$ is continuous on $[0,+\infty)$ as a map $t\mapsto u(t,\cdot)$ into
$L^1_{loc}(\R^n)$. Thus, we may and will always suppose that e.s. satisfy the continuity property $$u(t,\cdot)\in C([0,+\infty),L^1_{loc}(\R^n)).$$
In view of (\ref{ini}), we see that $u(0,x)=u_0(x)$ and in (\ref{ini}) we may replace the essential limit by the usual one.

When the flux vector is Lipschitz continuous, the existence and uniqueness of e.s. to the problem
(\ref{1}), (\ref{2}) are well-known (see \cite{Kr}). In the case under consideration when the flux
functions are merely continuous, the effect of infinite speed of propagation for initial perturbations
appears, which leads even to the nonuniqueness of e.s. to problem (\ref{1}), (\ref{2}) if $n>1$ (see
examples in \cite{KrPa1,KrPa2}).

But, if initial function is periodic (at least in $n-1$ independent directions), the uniqueness holds:
an e.s. of (\ref{1}), (\ref{2}) is unique and space-periodic, see the proof in \cite{PaMax1,PaMax2,PaIzv}. In general case there always exists the unique maximal and minimal e.s., see \cite{ABK,PaMax2,PaIzv}.

The aim of the present paper is the study of the long time decay property of e.s.
for localized in space (in some wide sense) initial data under precise genuine nonlinearity conditions on the flux vector.
In the case of one space variable $n=1$ and strictly convex flux function $\varphi(u)\in C^2(\R)$ the decay of e.s. is well-known, see the book \cite{DafBook} and the references therein.
In general multidimensional setting the decay property was studied mainly for space-periodic e.s. The analytical approach to such study was developed by G.-Q.~Chen and H.~Frid in \cite{ChF}. In particular, in this paper the following decay property for e.s. $u=u(t,x)$ of (\ref{1}), (\ref{2}) was proved
\begin{equation}\label{decp}
\lim_{t\to+\infty} u(t,\cdot)=M,
\end{equation}
where
$$
M=\frac{1}{|P|}\int_{P} u_0(x)dx
$$
is the mean value of initial data. Here
$P$ is a fundamental parallelepiped (a cell of periodicity) for the lattice of periods, and $|P|$ (or, alternatively, $\meas P$) denotes the Lebesgue measure of $P$. The above decay property was proved under rather restrictive regularity and genuine nonlinearity requirements. In   subsequent papers \cite{PaAIHP,Daferm,PaNHM} these requirements were significantly relaxed.

We will need in the sequel results of \cite[Theorem~1.3]{PaNHM}.
Suppose that the initial function $u_0$ is periodic with a lattice of periods $L$, i.e., $u_0(x+e)=u_0(x)$ a.e. on $\R^n$ for every $e\in L$ (we will call such functions $L$-periodic). Denote by $P$ a fundamental parallelepiped for the lattice $L$, and by $L'$ the dual lattice $L'=\{ \ \xi\in\R^n \ | \ \xi\cdot x\in\Z \ \forall x\in L \ \}$. Let, as in (\ref{decp}), $\displaystyle M=\frac{1}{|P|}\int_{P} u_0(x)dx$.

\begin{theorem}\label{thDP}
Suppose that
\begin{eqnarray}\label{NDp}
\forall\xi\in L', \xi\not=0 \ \mbox{ the function } u\to\xi\cdot\varphi(u) \nonumber\\ \mbox{ is not affine on any vicinity of } M.
\end{eqnarray}
Then the decay property (\ref{decp}) holds.
\end{theorem}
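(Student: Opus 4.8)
The plan is to reduce the decay statement, via the parabolic self-similar rescaling, to a compensated-compactness (H-measure) property of genuinely nonlinear conservation laws with continuous flux, and then to upgrade the resulting time-averaged decay to pointwise-in-time decay by means of the $L^1$-contraction for space-periodic entropy solutions.

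First I would use the scaling invariance of both (\ref{1}) and the entropy relations (\ref{entr}): for each $\nu>0$ the function $u_\nu(t,x):=u(\nu t,\nu x)$ is again an e.s.\ of (\ref{1}), with $\|u_\nu\|_{L^\infty(\Pi)}=\|u\|_{L^\infty(\Pi)}$ and $\nu^{-1}L$-periodic initial function $u_{0,\nu}(x)=u_0(\nu x)$. By the mean-value property of periodic functions $u_{0,\nu}\to M$ weakly-$*$ in $L^\infty(\R^n)$ as $\nu\to+\infty$; since the space-mean over a cell of periodicity is conserved along periodic e.s.\ (integrate (\ref{1}) in $x$ over a period), the same argument gives $u_\nu(t,\cdot)\to M$ weakly-$*$ in $L^\infty(\R^n)$ for \emph{every} $t\ge0$. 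The decisive claim, from which the theorem follows, is that $u_\nu\to M$ as $\nu\to+\infty$ \emph{strongly} in $L^1_{loc}(\Pi)$.

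For this I would invoke the strong-precompactness / regularizing effect for entropy solutions of genuinely nonlinear conservation laws with merely continuous flux, established via H-measures in \cite{PaAIHP,PaNHM}. The key point is that for a space-periodic solution only the dual-lattice directions enter: writing $u(\tau,y)=\sum_{\xi\in L'}\hat u_\xi(\tau)e^{2\pi i\,\xi\cdot y}$, one gets $u_{\nu_m}(t,x)=\sum_{\xi\in L'}\hat u_\xi(\nu_m t)e^{2\pi i\,\nu_m\xi\cdot x}$ for any sequence $\nu_m\to+\infty$, so every non-constant spatial mode of $u_{\nu_m}$ sits at a frequency $\nu_m\xi$, $\xi\in L'\setminus\{0\}$; hence the H-measure associated (along a subsequence) with the oscillation $u_{\nu_m}-M\rightharpoonup0$ is, in its spatial directions, carried by the countable set $\{\pm\xi/|\xi|:\xi\in L'\setminus\{0\}\}$, and its accompanying Young measure has barycentre $M$ a.e.\ (by the weak-$*$ convergence above). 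The localization principle for (\ref{1}), together with precisely hypothesis (\ref{NDp}) — the non-affinity near $M$ of $\lambda\mapsto\xi\cdot\varphi(\lambda)$ along each direction $\xi\in L'\setminus\{0\}$ — then forces the Young measure to reduce to $\delta_M$ and the H-measure to vanish, i.e.\ $u_{\nu_m}\to M$ in $L^2_{loc}(\Pi)$. As $\nu_m$ was arbitrary, $u_\nu\to M$ in $L^1_{loc}(\Pi)$. This compensated-compactness step — and in particular the reduction of the genuine-nonlinearity requirement to the dual-lattice directions — is the main obstacle; the remaining steps are routine.

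Finally I would return to the original time scale. Setting $\psi(s):=\|u(s,\cdot)-M\|_{L^1(P)}$ and using the $\nu^{-1}L$-periodicity of $u_\nu(t,\cdot)$ together with the change of variables $x\mapsto\nu x$, one checks that
$$
\int_1^2\!\int_{[0,1]^n}|u_\nu(t,x)-M|\,dx\,dt=\frac{1}{|P|}\int_1^2\psi(\nu t)\,dt+O(\nu^{-1}),
$$
whence $\frac1\nu\int_\nu^{2\nu}\psi(s)\,ds\to0$ as $\nu\to+\infty$. But $\psi$ is non-increasing: comparing the space-periodic e.s.\ $u$ with the constant e.s.\ $M$ via the $L^1$-contraction for space-periodic entropy solutions \cite{PaMax2,PaIzv} gives $\psi(t)\le\psi(s)$ for $t\ge s\ge0$. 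A non-negative non-increasing function whose averages over $[\nu,2\nu]$ tend to $0$ must itself tend to $0$ (note $\nu\,\psi(2\nu)\le\int_\nu^{2\nu}\psi$); hence $\psi(s)\to0$ as $s\to+\infty$, and since $u$ is $L$-periodic this is exactly the decay property (\ref{decp}).
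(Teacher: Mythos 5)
This theorem is not proved in the paper at all --- it is quoted verbatim from \cite{PaNHM} (Theorem~1.3 there) --- so there is no internal proof to compare against; your sketch reconstructs the argument of that reference essentially faithfully: hyperbolic rescaling $u_\nu(t,x)=u(\nu t,\nu x)$, localization of the H-measure on the dual-lattice directions, reduction of the Young measure to $\delta_M$ via the non-affinity hypothesis, and the upgrade from time-averaged to pointwise decay via monotonicity of $t\mapsto\|u(t,\cdot)-M\|_{L^1(P)}$. The two points that carry all the weight, and that you rightly flag as the main obstacle, are (i) the localization principle for H-measures of entropy solutions with merely continuous flux, including the exclusion of purely temporal oscillations --- your frequency-support argument constrains only the spatial components, and at the poles $\xi=0$ of the $(\tau,\xi)$-sphere the non-degeneracy condition is vacuous since $\lambda\mapsto\tau\lambda$ is always affine, so this case needs the evolution structure of the equation; and (ii) the observation that a non-Dirac Young measure with barycentre $M$ must charge both sides of $M$, so that the interval spanned by its support is a genuine vicinity of $M$ --- this is exactly why non-affinity only near $M$ suffices. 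Modulo importing these facts from \cite{PaAIHP,PaNHM}, exactly as the paper itself does, the proposal is sound.
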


In this paper we suppose that the initial function is such that
\begin{equation}\label{van}
\forall\lambda>0 \quad \meas \{ \ x\in\mathbb{R}^n: \ |u_0(x)|>\lambda \ \}<+\infty
\end{equation}
(in particular, this requirement is satisfied for
$u_0\in L^p(\R^n)$, $1\le p<\infty$). Assume also that the merely continuous flux vector $\varphi(u)$ satisfies the following \textit{genuine nonlinearity} requirement
\begin{equation}\label{GN}
\mbox{ the vector } \varphi(u) \mbox{ is not affine on any nonempty interval } (a,b) \mbox{ such that } ab=0.
\end{equation}
To study the decay property, we introduce the topology on $L^\infty(\R^n)$ stronger than one induced by $L^1_{loc}(\R^n)$. This topology is generated by the following norm
\begin{equation}\label{normX}
\|u\|_X=\sup_{y\in\R^n} \int_{|x-y|<1} |u(x)|dx
\end{equation}
(where we denote by $|z|$ the Euclidean norm of a finite-dimensional vector $z$).
Obviously, this norm is shift-invariant: $\|u(\cdot+y)\|_X=\|u\|_X$ for each $y\in\R^n$. It is not difficult to verify that norm (\ref{normX}) is equivalent to each of more general norms
\begin{equation}\label{normV}
\|u\|_V=\sup_{y\in\R^n} \int_{y+V} |u(x)|dx,
\end{equation}
where $V$ is any bounded open set in $\R^n$ (the original norm $\|\cdot\|_X$ corresponds to the unit ball $|x|<1$). For the sake of completeness we prove this result in Lemma~\ref{equ} below.

Our main result is the following decay property.

\begin{theorem}\label{thM}
If $u(t,x)$ is an e.s. of (\ref{1}), (\ref{2}) then, under assumption (\ref{GN}),
\begin{equation}\label{dec}
\lim_{t\to+\infty}\|u(t,\cdot)\|_X=0.
\end{equation}
\end{theorem}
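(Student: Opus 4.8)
The plan is to squeeze $u$ between two \emph{space-periodic} entropy solutions to which Theorem~\ref{thDP} applies, the mean values of these barriers being admissible values of the flux arbitrarily close to $0$; the decay of the barriers, first obtained in $L^1_{loc}$, is then upgraded to the norm $\|\cdot\|_X$ by periodicity. It clearly suffices to show that for every $\varepsilon>0$ one has $\limsup_{t\to+\infty}\|u(t,\cdot)\|_X\le C_n\varepsilon$ with $C_n$ depending only on $n$; letting $\varepsilon\to 0$ then gives (\ref{dec}). Splitting $u$ into its positive and negative parts and using $\|u(t,\cdot)\|_X\le\|u^+(t,\cdot)\|_X+\|u^-(t,\cdot)\|_X$, it is enough to estimate each part, and we treat $u^+$, the case of $u^-$ being symmetric. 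Throughout, $\|u(t,\cdot)\|_\infty\le K:=\|u_0\|_\infty$ by the maximum principle.

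The role of assumption (\ref{GN}) is the following. Call $v\in\R$ \emph{admissible} if $\varphi$ is not affine on any neighbourhood of $v$, and for such $v$ put $U_v=\{\xi\in\R^n:\ u\mapsto\xi\cdot\varphi(u)\ \text{is affine on some neighbourhood of}\ v\}$. Then $U_v$ is a linear subspace (sums and multiples of covectors affine near $v$ are again such), and $U_v\ne\R^n$, for otherwise every $\varphi_i$, hence $\varphi$ itself, would be affine on a common neighbourhood of $v$. Moreover, admissible values accumulate at $0$ from both sides: if $\varphi$ were affine on a neighbourhood of every point of some interval $(0,\delta)$, then writing $(0,\delta)$ as the union of the pairwise disjoint maximal open intervals of affinity of $\varphi$ and invoking connectedness would force $\varphi$ to be affine on all of $(0,\delta)$, against (\ref{GN}); likewise on $(-\delta,0)$. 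Hence, given $\varepsilon>0$, I can fix admissible $M^+\in(0,\varepsilon)$ and $M^-\in(-\varepsilon,0)$ and then, since $U_{M^+}\cup U_{M^-}$ is a union of two proper subspaces, choose a lattice $L\subset\R^n$ whose dual lattice $L'$ meets $U_{M^+}\cup U_{M^-}$ only at the origin — first selecting the shape of $L$ in general position, then rescaling $L$ to enlarge its fundamental parallelepiped $P$ as much as subsequently needed. For this $L$ and these means, condition (\ref{NDp}) of Theorem~\ref{thDP} holds, since no $\xi\in L'\setminus\{0\}$ lies in $U_{M^\pm}$.

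Now I build the barriers using (\ref{van}). The set $S^+=\{x:u_0(x)>M^+/2\}$ has finite measure; with $\theta:=M^+/(2K-M^+)\in(0,1)$, let $v_0^+$ be the $L$-periodic function equal to $K$ on an $L$-periodic set $\widehat A\supseteq\bigcup_{e\in L}(S^++e)$ having measure $\theta\,\meas P$ in each cell — possible once $\meas P$ is large, since $\meas(\widehat A\cap P)$ can range from $\meas\bigl(\bigcup_{e\in L}(S^++e)\cap P\bigr)\le\meas S^+<\theta\,\meas P$ up to $\meas P$ by padding — and equal to $M^+/2$ elsewhere. Then $v_0^+\ge u_0$ a.e., $v_0^+$ is $L$-periodic and bounded, and its mean over $P$ equals $\theta K+(1-\theta)M^+/2=M^+$. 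Symmetrically, from $\{u_0<M^-/2\}$ one constructs an $L$-periodic $v_0^-\le u_0$ with mean $M^-$. Denoting by $v^\pm$ the (unique, by space-periodicity) e.s. with data $v_0^\pm$, the comparison principle — realized via the maximal and minimal e.s. (these exist, every e.s. with prescribed data lies between them, and they are monotone with respect to the data) — gives $v^-(t,x)\le u(t,x)\le v^+(t,x)$ a.e. on $\Pi$. By Theorem~\ref{thDP}, $v^\pm(t,\cdot)\to M^\pm$ in $L^1_{loc}(\R^n)$; since $v^\pm-M^\pm$ is bounded and $L$-periodic and every unit ball meets at most a fixed number of the translates $P+e$, $e\in L$, this yields $\|v^\pm(t,\cdot)-M^\pm\|_X\to 0$ (cf. Lemma~\ref{equ}). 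Finally $u(t,x)\le v^+(t,x)\le M^++|v^+(t,x)-M^+|$ (as $M^+>0$) gives $\|u^+(t,\cdot)\|_X\le\omega_n M^++\|v^+(t,\cdot)-M^+\|_X$ with $\omega_n=\meas\{|x|<1\}$, so $\limsup_t\|u^+(t,\cdot)\|_X\le\omega_n M^+<\omega_n\varepsilon$, and similarly for $u^-$. Hence $\limsup_t\|u(t,\cdot)\|_X\le 2\omega_n\varepsilon$, and $\varepsilon\to 0$ finishes the proof.

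The main obstacle is the second and third steps taken together: one must produce a periodic barrier that dominates (or is dominated by) $u_0$, has arbitrarily small mean, \emph{and} has that mean exactly equal to an admissible value, while simultaneously using a single lattice $L$ whose dual avoids the exceptional subspaces $U_{M^\pm}$. Establishing from (\ref{GN}) alone that admissible values accumulate at $0$ — this is precisely what makes Theorem~\ref{thDP} usable at a small mean — and performing the comparison carefully despite the non-uniqueness of e.s.\ are the delicate points; by comparison, the passage from $L^1_{loc}$-convergence to $\|\cdot\|_X$-convergence of the periodic barriers is routine.
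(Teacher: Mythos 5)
Your proposal is correct and follows the same overall route as the paper: sandwich the given e.s.\ between two space-periodic e.s.\ whose means are ``admissible'' values (points of non-affinity of $\varphi$) arbitrarily close to $0$, pick the lattice in general position so that condition (\ref{NDp}) holds and Theorem~\ref{thDP} applies, upgrade the $L^1_{loc}$ decay of the periodic barriers to $\|\cdot\|_X$ by periodicity, and conclude by comparison. Your subsidiary arguments also match the paper's: the accumulation of admissible values at $0$ is Lemma~\ref{lem3}, the general-position choice of the lattice is Lemma~\ref{lem1}, and the comparison via the monotone maximal/minimal e.s.\ is exactly the mechanism of \cite[Corollary~3]{PaMax1} invoked in the paper. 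The one place where you genuinely diverge is the construction of the periodic majorant/minorant with prescribed admissible mean. The paper takes $v_{0r}^{\pm}(x)=\sup_{e\in L}u_0(x+re)$ (resp.\ $\inf$), uses (\ref{van}) to show that the cell average of $\sup_{e}|u_0(\cdot+re)|$ tends to $0$ as the cell grows, and then shifts $v_{0r}^{\pm}$ by a constant so that the mean lands exactly on a nearby point of $F$; you instead build a two-valued step function (equal to $K$ on a padded periodization of the finite-measure superlevel set $\{u_0>M^+/2\}$ and to $M^+/2$ elsewhere), hitting the mean $M^+$ exactly by the choice of the padding fraction $\theta$. Both constructions are valid; yours is somewhat more elementary and dispenses with the mean-correction step, while the paper's choice of a single lattice shape that is in general position with respect to the whole countable family $X_I$, $I\in J$, lets it use any admissible mean without revisiting the lattice --- in your version the lattice depends on $M^{\pm}$, which is harmless since only two proper subspaces $U_{M^{\pm}}$ must be avoided and rescaling $L$ does not affect whether a nonzero dual vector lies in them.
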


Notice that condition (\ref{GN}) is precise. In fact, if it fails, there exists an interval $(a,0)$ or $(0,b)$ where the vector $\varphi(u)$ is affine. Assume for definiteness that
$\varphi(u)=uc+d$ on an interval $(0,b)$, $b>0$, where $c,d\in\R^n$. If initial function $u_0(x)$ is such that $0\le u_0(x)\le b$ then an e.s. of (\ref{1}), (\ref{2}) is the traveling wave $u(t,x)=u_0(x-tc)$. If $u_0\not=0$ this e.s. does not satisfy (\ref{dec}).

Remark that under assumption (\ref{van}), $u_0(x-tc)\to 0$ as $t\to+\infty$ in $L^1_{loc}(\R^n)$ if the speed $c\not=0$. One of the reason for using the stronger topology of $X$ is to exclude the decay of such traveling waves.

\section{Proof of the main results}
\subsection{Auxiliary lemmas}

\begin{lemma}\label{equ}
The norms $\|\cdot\|_V$ defined in (\ref{normV}) are mutually equivalent.
\end{lemma}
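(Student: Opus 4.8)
The plan is to reduce the whole statement to a single covering argument. It suffices to show that for any two nonempty bounded open sets $V_1,V_2\subset\R^n$ there is an integer $N=N(V_1,V_2)$ with $\|u\|_{V_2}\le N\|u\|_{V_1}$ for all $u\in L^\infty(\R^n)$; interchanging the roles of $V_1$ and $V_2$ then yields the reverse inequality, and transitivity gives mutual equivalence of the entire family of norms $\|\cdot\|_V$ (in particular of each one with the original $\|\cdot\|_X$, which corresponds to $V=\{|x|<1\}$).

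To prove the one-sided bound I would first use that $V_1$ is open and nonempty to fix an open ball $B(x_0,r)\subset V_1$ with $r>0$. Since $V_2$ is bounded, its closure $\Cl V_2$ is compact, hence covered by finitely many balls of the fixed radius $r$ centered at points of $\Cl V_2$: $\Cl V_2\subset\bigcup_{i=1}^N B(y_i,r)$. Writing $B(y_i,r)=(y_i-x_0)+B(x_0,r)\subset(y_i-x_0)+V_1$, we obtain the inclusion $V_2\subset\bigcup_{i=1}^N\bigl((y_i-x_0)+V_1\bigr)$.

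Now fix $y\in\R^n$. Since $|u|\ge 0$, monotonicity of the integral together with the inclusion above and the definition of $\|\cdot\|_{V_1}$ as a supremum over translates give
$$
\int_{y+V_2}|u(x)|\,dx\le\sum_{i=1}^N\int_{y+(y_i-x_0)+V_1}|u(x)|\,dx\le\sum_{i=1}^N\|u\|_{V_1}=N\|u\|_{V_1}.
$$
Taking the supremum over $y\in\R^n$ yields $\|u\|_{V_2}\le N\|u\|_{V_1}$, as desired.

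I do not expect a genuine obstacle here; the only points needing care are the order of the two inclusions — one must first fit a ball \emph{inside} the open set $V_1$ and only then cover the \emph{closure} of $V_2$ (compact because $V_2$ is bounded) by finitely many translates of that ball — and the use of nonnegativity of $|u|$ to pass from $V_2$ to the larger finite union. One may also note that $\|u\|_V\le\|u\|_\infty\cdot\meas V<\infty$ for every $u\in L^\infty(\R^n)$, so that each $\|\cdot\|_V$ is indeed a norm; but for the lemma as stated only the equivalence just established is required.
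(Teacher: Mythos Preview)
Your proof is correct and follows essentially the same covering-by-translates argument as the paper: both use compactness to cover the closure of one bounded set by finitely many translates of the other, then bound the integral by the corresponding sum. The only cosmetic difference is your intermediate ball $B(x_0,r)\subset V_1$, which makes the covering step a bit more explicit than the paper's direct cover of $\Cl V_1$ by sets $y_i+V_2$.
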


\begin{proof}
Let $V_1,V_2$ be open bounded sets in $\R^n$, and $K_1=\Cl V_1$ be the closure of $V_1$. Then $K_1$ is a compact set while $y+V_2$, $y\in K_1$, is its open covering.
By the compactness there is a finite set $y_i$, $i=1,\ldots,m$, such that
$\displaystyle K_1\subset \bigcup\limits_{i=1}^m (y_i+V_2)$. This implies that for every $y\in\R^n$ and $u=u(x)\in L^\infty(\R^n)$
$$
\int_{y+V_1}|u(x)|dx\le\sum_{i=1}^m \int_{y+y_i+V_2}|u(x)|dx\le m\|u\|_{V_2}.
$$
Hence, $\forall u=u(x)\in L^\infty(\R^n)$
$$
\|u\|_{V_1}=\sup_{y\in\R^n}\int_{y+V_1}|u(x)|dx\le m\|u\|_{V_2}.
$$
Changing the places of $V_1$, $V_2$, we obtain the inverse inequality
$\|u\|_{V_2}\le l\|u\|_{V_1}$ for all $u\in L^\infty(\R^n)$, where $l$ is some positive constant. This completes the proof.
\end{proof}

\begin{lemma}\label{lem1}
Let $X_\alpha$, $\alpha\in\N$, be a countable family of proper linear subspaces of $\R^n$. Then there exists a lattice $L\subset\R^n$ such that $L\cap X_\alpha=\{0\}$ for all $\alpha\in\N$.
\end{lemma}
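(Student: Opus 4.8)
The plan is to produce the lattice in the form $L=A\Z^n$ for a suitably \emph{generic} matrix $A\in GL_n(\R)$, and to show that the matrices which fail are negligible. First I would reduce the problem to hyperplanes: since each $X_\alpha$ is a proper subspace, it is contained in a hyperplane $H_\alpha=\{\,x\in\R^n : \ell_\alpha\cdot x=0\,\}$ for some $\ell_\alpha\in\R^n\setminus\{0\}$. Hence it suffices to find $A\in GL_n(\R)$ with $A\Z^n\cap H_\alpha=\{0\}$ for every $\alpha\in\N$, because then $A\Z^n\cap X_\alpha\subset A\Z^n\cap H_\alpha=\{0\}$.

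Next I would run a measure-theoretic (equivalently, Baire-category) argument on the space of matrices $\R^{n\times n}\cong\R^{n^2}$. For fixed $\alpha\in\N$ and fixed $k\in\Z^n\setminus\{0\}$ put $B_{\alpha,k}=\{\,A\in\R^{n\times n} : \ell_\alpha\cdot(Ak)=0\,\}$. The map $A\mapsto\ell_\alpha\cdot(Ak)$ is linear in $A$, and it is not identically zero: since $k\ne0$, the assignment $A\mapsto Ak$ is a surjection $\R^{n\times n}\to\R^n$, so one can pick $A$ with $Ak$ equal to any prescribed vector, in particular to one outside $\ker\ell_\alpha$. Therefore $B_{\alpha,k}$ is a linear hyperplane in $\R^{n^2}$, hence has $n^2$-dimensional Lebesgue measure zero. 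Consequently the countable union $B=\bigcup_{\alpha\in\N}\bigcup_{k\in\Z^n\setminus\{0\}}B_{\alpha,k}$ is a null set, whereas $GL_n(\R)$ is open and nonempty, hence of positive measure; thus there exists $A\in GL_n(\R)\setminus B$.

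For such an $A$, take $L=A\Z^n$, which is a lattice since $A$ is invertible. Any nonzero $v\in L$ is of the form $v=Ak$ with $k\in\Z^n\setminus\{0\}$, and as $A\notin B_{\alpha,k}$ we get $\ell_\alpha\cdot v=\ell_\alpha\cdot(Ak)\ne0$, so $v\notin H_\alpha$ and a fortiori $v\notin X_\alpha$; hence $L\cap X_\alpha=\{0\}$ for all $\alpha$, as required. The only step demanding any care is the verification that each $B_{\alpha,k}$ is a \emph{proper} subspace of matrix space (equivalently, that the corresponding linear functional is nontrivial); everything else is routine. I would also note that the same scheme works with an explicit one-parameter family $A=A(t)$ whose entries are polynomials in $t$ with $\det A(t)\not\equiv0$ (e.g.\ a Vandermonde-type "moment curve" matrix): for each $(\alpha,k)$ the quantity $\ell_\alpha\cdot(A(t)k)$ is then a polynomial in $t$, nonzero for suitable choices, so it has only finitely many roots, and a parameter $t$ avoiding the countably many bad values can be selected directly.
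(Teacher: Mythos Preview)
Your proof is correct and follows essentially the same approach as the paper: both construct the lattice as $L=A\Z^n$ for an $A\in GL_n(\R)$ chosen outside a countable union of proper linear subspaces of $\R^{n\times n}$, using that such a union has Lebesgue measure zero while $GL_n(\R)$ has positive measure. The only cosmetic difference is that you first enlarge each $X_\alpha$ to a containing hyperplane $H_\alpha$ (so your bad sets $B_{\alpha,k}$ are hyperplanes in matrix space and their properness is immediate), whereas the paper works directly with the sets $\{A:A\xi\in X_\alpha\}$ and simply asserts they are proper subspaces; the arguments are otherwise identical.
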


\begin{proof}
Denote by $\mathrm{L}_n$ the linear space of linear endomorphisms of $\R^n$, and by
$\mathrm{GL}_n$ the group of linear automorphisms of $\R^n$. It is clear that $\mathrm{GL}_n$ is an open subset of $\mathrm{L}_n$, this set can be identified with the set of all $n\times n$ matrices with nonzero determinant. For $\alpha\in\N$, $\xi\in\Z^n$, $\xi\not=0$, we define the sets
$$
H_{\xi,\alpha}=\{ \ A\in\mathrm{L}_n: \ A\xi\in X_\alpha \ \}, \quad
H=\bigcup_{\xi\in\Z^n\setminus\{0\},\alpha\in\N} H_{\xi,\alpha}.
$$
Obviously, the sets $H_{\xi,\alpha}$ are proper linear subspaces of $\mathrm{L}_n$ and therefore they have zero Lebesgue measure in $\mathrm{L}_n$. This implies that
$H$ is a set of zero measure as a countable union of $H_{\xi,\alpha}$. Since the measure of $\mathrm{GL}_n$ is positive (even infinite), then $\mathrm{GL}_n\not\subset H$ and we can find $A\in \mathrm{GL}_n$ such that $A\not\in H$. We define the lattice $L$ as the image of the standard lattice $\Z^n$ under the automorphism $A$: $L=A(\Z^n)$. Since $A\notin H$, we conclude that $L$ satisfies the required condition $L\cap X_\alpha=\{0\}$ $\forall\alpha\in\N$.
\end{proof}

We define the set $F\subset\R$ consisting of points $u_0$ such that the vector $\varphi(u)$ is not affine in any vicinity of $u_0$, and denote $F_+=F\cap (0,+\infty)$, $F_-=F\cap (-\infty,0)$.

\begin{lemma}\label{lem3}
Assume that genuine nonlinearity assumption (\ref{GN}) is satisfied. Then
\begin{equation}\label{Fpm}
\sup F_-=\inf F_+=0.
\end{equation}
\end{lemma}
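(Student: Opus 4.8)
The plan is to reduce the statement to the elementary fact that a function affine in a neighbourhood of every point of an open interval must be affine on that whole interval, and then to apply the genuine nonlinearity assumption (\ref{GN}) to intervals of the form $(0,\varepsilon)$ and $(-\varepsilon,0)$.

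First I would record the following claim: if $I$ is a nonempty open interval such that each $u\in I$ has a neighbourhood on which $\varphi$ coincides with an affine function, then $\varphi$ is affine on all of $I$. To prove it, fix $u_1\in I$, so that $\varphi(v)=cv+d$ on a neighbourhood of $u_1$ for some $c,d\in\R^n$, and let $Z$ be the set of those $u\in I$ for which $\varphi(v)=cv+d$ for all $v$ in some neighbourhood of $u$. The set $Z$ is open by construction and nonempty, since $u_1\in Z$. It is also closed in $I$: if $u_k\in Z$ and $u_k\to u^*\in I$, then near $u^*$ one has $\varphi(v)=c^*v+d^*$ for some $c^*,d^*\in\R^n$, and comparing the two affine representations on the nondegenerate overlap of a neighbourhood of $u^*$ with a neighbourhood of some $u_k\in Z$ forces $c^*=c$ and $d^*=d$; hence $u^*\in Z$. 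Since $I$ is connected, $Z=I$, and therefore $\varphi(v)=cv+d$ for all $v\in I$.

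Next I would argue by contradiction. Because $F_+\subset(0,+\infty)$ and $F_-\subset(-\infty,0)$, we have $\inf F_+\ge 0\ge\sup F_-$ (with the usual convention for the empty set). Suppose $\inf F_+>0$; then there is $\varepsilon>0$ with $F\cap(0,\varepsilon)=\emptyset$, so by the definition of $F$ every point of $(0,\varepsilon)$ admits a neighbourhood on which $\varphi$ is affine. By the claim, $\varphi$ is affine on the nonempty interval $(0,\varepsilon)$, and since $0\cdot\varepsilon=0$ this contradicts (\ref{GN}); hence $\inf F_+=0$. The equality $\sup F_-=0$ is obtained identically, applying the claim to an interval $(-\varepsilon,0)$ and using $(-\varepsilon)\cdot 0=0$ in (\ref{GN}) (alternatively, one may replace $\varphi(u)$ by $\varphi(-u)$).

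The only genuine work here is the auxiliary claim; once it is available, both equalities in (\ref{Fpm}) follow by a one-line application of (\ref{GN}). I expect the closedness step of the claim — ruling out a jump of the affine data $(c,d)$ as one moves along $I$ — to be the technical heart of the argument, although it is completely routine.
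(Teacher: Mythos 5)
Your proof is correct and follows essentially the same route as the paper: argue by contradiction, observe that $\inf F_+>0$ (resp.\ $\sup F_-<0$) yields an interval adjacent to $0$ on which $\varphi$ is locally affine at every point, show this forces $\varphi$ to be affine on the whole interval, and contradict (\ref{GN}). The only difference is cosmetic: the paper deduces ``locally affine implies affine'' by noting that $\varphi'$ is locally constant and continuous on a connected set, whereas you run a slightly more elementary open--closed connectedness argument on the affine data $(c,d)$ directly; both are fine.
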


\begin{proof}
Supposing the contrary, we find that either $\sup F_-<0$ or $\inf F_+>0$. We consider the latter case $\inf F_+>0$, the former case $\sup F_-<0$ is treated similarly.
Let $0<b<\inf F_+$ (notice that $\inf F_+=+\infty$ in the case $F_+=\emptyset$). We see that $(0,b)\cap F=\emptyset$, that is, the vector $\varphi(u)$ is affine in some vicinity of each point in $(0,b)$. Therefore, $\varphi(u)\in C^\infty((0,b))$ and $\varphi'(u)$ is piecewise constant continuous function on $(0,b)$. This is possible only if $\varphi'(u)$ is constant, $\varphi'(u)\equiv c\in\R^n$. This implies that $\varphi(u)=uc+d$ on $(0,b)$, $d\in\R^n$. Hence, the vector $\varphi(u)$ is affine on $(0,b)$, which contradicts to (\ref{GN}).
\end{proof}

\subsection{Proof of Theorem~\ref{thM}}

The proof is relied on the decay property for periodic e.s. First we choose a lattice of periods $L$.

Let $J$ be the sets of intervals $I=(a,b)$ with rational ends $a,b\in\Q$ such that $I\cap F\not=\emptyset$. It is clear that $J$ is a countable set.
For each $I\in J$ we define the linear sets
$$
X_I=\{ \ \xi\in\R^n: \ \mbox{ the function } u\to\xi\cdot\varphi(u) \mbox{ is affine on } I \ \}.
$$
Then $X_I\not=\R^n$, otherwise, the entire vector $\varphi(u)$ is affine on $I$, which contradicts to the condition that $I$ is a neighborhood of some point $u_0\in I\cap F$. Hence $X_I$, $I\in J$, are proper linear subspace of $\R^n$. By Lemma~\ref{lem1} we can find a lattice $L_1$ in $\R^n$
such that $\xi\notin X_I$ for all $\xi\in L_1$, $\xi\not=0$, and all $I\in J$. Let $L=L_1'=\{e\in\R^n: \xi\cdot e\in\Z \ \forall\xi\in L_1\}$ be the dual lattice. Then by the duality $L_1=L'$. By the density of $\Q$, any nonempty interval $(a,b)$ intersecting with F contains some interval $I\in J$. Since every nonzero $\xi\in L'=L_1$ does not belong to $X_I$, we claim that the function $\xi\cdot\varphi(u)$ is not affine on $I$ and, all the more, on $(a,b)$. Hence,
\begin{eqnarray}\label{p1}
\forall\xi\in L', \xi\not=0, \mbox{ the function } u\to\xi\cdot\varphi(u) \nonumber\\
\mbox{ is not affine on intervals intersecting with } F.
\end{eqnarray}
Let $e_k$, $k=1,\ldots,n$, be a basis of the lattice $L$. We define for $r>0$ the parallelepiped
$$
P_r=\left\{ \ x=\sum_{k=1}^n x_ke_k: \ -r/2\le x_k<r/2, k=1,\ldots,n \ \right\}.
$$
It is clear that $P_r$ is a fundamental parallelepiped for a lattice $rL$.
We introduce the functions
$$
v_{0r}^+(x)=\sup_{e\in L} u_0(x+re), \quad v_{0r}^-(x)=\inf_{e\in L} u_0(x+re).
$$
Since $L$ is countable, these function are well-defined in $L^\infty(\R^n)$,
and $\|v_{0r}^\pm\|_\infty\le C_0\doteq\|u_0\|_\infty$. It is clear that $v_{0r}^\pm(x)$ are $rL$-periodic and
\begin{equation}\label{p2v}
v_{0r}^-(x)\le u_0(x)\le v_{0r}^+(x).
\end{equation}
We denote
$$
V_r(x)=\sup_{e\in L} |u_0(x+re)|, \quad M_r=\frac{1}{|P_r|}\int_{P_r} V_r(x)dx.
$$
It is clear that for a.e. $x\in\R^n$
$$
|v_{0r}^\pm(x)|\le V_r(x)\le C_0.
$$
Let us show that under condition (\ref{van})
\begin{equation}\label{l3}
M_r\to 0 \ \mbox{ as } r\to+\infty.
\end{equation}
For that we fix $\varepsilon>0$ and define the set $A=\{ \ x\in\R^n: \ |u_0(x)|>\varepsilon \ \}$.
In view of (\ref{van}) the measure of this set is finite, $\meas A=p<+\infty$. We also define the sets
$$
A_r^e=\{ \ x\in P_r: \ x+re\in A \ \}\subset P_r, \quad r>0, \ e\in L, \quad A_r=\bigcup_{e\in L} A_r^e.
$$
By the translation invariance of Lebesgue measure we have
$$
\sum_{e\in L}\meas A_r^e=\sum_{e\in L}\meas (re+A_r^e)=\sum_{e\in L}\meas (A\cap (re+P_r))=\meas A=p.
$$
This implies that
\begin{equation}\label{Ar}
\meas A_r\le\sum_{e\in L}\meas A_r^e=p.
\end{equation}
If $x\notin A_r$ then $|u_0(x+re)|\le\varepsilon$ for all $e\in L$, which implies that $V_r(x)\le\varepsilon$. Taking (\ref{Ar}) into account, we find
$$
\int_{P_r} V_r(x)dx=\int_{A_r} V_r(x)dx+\int_{P_r\setminus A_r} V_r(x)dx\le C_0\meas A_r+\varepsilon\meas P_r\le C_0p+\varepsilon|P_r|.
$$
It follows from this estimate that
$$\limsup_{r\to+\infty} M_r\le\lim_{r\to+\infty}\left(\frac{C_0p}{|P_r|}+\varepsilon\right)=\varepsilon$$ and since $\varepsilon>0$ is arbitrary, we conclude that (\ref{l3}) holds.
Let
$$
M_r^\pm=\frac{1}{|P_r|}\int_{P_r} v_{0r}^\pm(x)dx
$$
be mean values of $rL$-periodic functions $v_{0r}^\pm(x)$.
In view of (\ref{l3})
\begin{equation}\label{l4}
|M_r^\pm|\le M_r\mathop{\to}_{r\to\infty} 0.
\end{equation}
By (\ref{l4}) and (\ref{Fpm}) we can find such values $B_r^-,B_r^+\in F$, where $r>0$ is sufficiently large, that $B_r^-\le M_r^-\le M_r^+\le B_r^+$, and that $B_r^\pm\to 0$ as $r\to\infty$. We define the $rL$-periodic functions
$$
u_{0r}^+(x)=v_{0r}^+(x)-M_r^++B_r^+\ge v_{0r}^+(x), \quad u_{0r}^-(x)=v_{0r}^-(x)-M_r^-+B_r^-\le v_{0r}^-(x)
$$
with the mean values $B_r^+,B_r^-$, respectively. In view of (\ref{p2v}), we have
\begin{equation}\label{p2}
u_{0r}^-(x)\le u_0(x)\le u_{0r}^+(x).
\end{equation}
Let $u_r^\pm$ be unique (by \cite[Corollary~3]{PaMax1}) e.s. of (\ref{1}), (\ref{2}) with initial functions
$u_{0r}^\pm$, respectively. Taking into account that $(rL)'=\frac{1}{r}L'$, we derive from (\ref{p1}) that condition (\ref{NDp}), corresponding to the lattice $rL$ and the mean value $M=B_r^\pm\in F$, is satisfied. By Theorem~\ref{decp} we find that
\begin{equation}\label{l5}
\lim_{t\to+\infty}\int_{P_r}|u_r^\pm(t,x)-B_r^\pm|dx=0.
\end{equation}
By the periodicity, for each $y\in\R^n$
$$
\int_{y+P_r}|u_r^\pm(t,x)-B_r^\pm|dx=\int_{P_r}|u_r^\pm(t,x)-B_r^\pm|dx,
$$
which readily implies that for $V=\Int P_r$
$$
\|u_r^\pm(t,x)-B_r^\pm\|_V=\int_{P_r}|u_r^\pm(t,x)-B_r^\pm|dx.
$$
In view of Lemma~\ref{equ} we have the estimate
$$\|u_r^\pm(t,x)-B_r^\pm\|_X\le C\int_{P_r}|u_r^\pm(t,x)-B_r^\pm|dx, \ C=C_r=\const.$$
By (\ref{l5}) we claim that
\begin{equation}\label{l5a}
\lim_{t\to+\infty}\|u_r^\pm(t,\cdot)-B_r^\pm\|_X=0.
\end{equation}
Let $u=u(t,x)$ be an e.s. of the original problem (\ref{1}), (\ref{2}) with initial data $u_0(x)$. Since the functions $u_{0r}^\pm$ are periodic, then it follows from (\ref{p2}) and the comparison principle \cite[Corollary~3]{PaMax1} that $u_r^-\le u\le u_r^+$ a.e. in $\Pi$. This readily implies the relation
\begin{eqnarray}\label{l6}
\|u(t,\cdot)\|_X\le \|u_r^-(t,\cdot)\|_X+\|u_r^+(t,\cdot)\|_X\le \nonumber\\
\|u_r^-(t,x)-B_r^-\|_X+\|u_r^+(t,x)-B_r^+\|_X+c(|B_r^-|+|B_r^+|),
\end{eqnarray}
where $c$ is Lebesgue measure of the unite ball $|x|<1$ in $\R^n$.
In view of (\ref{l5a}) it follows from (\ref{l6}) in the limit as $t\to+\infty$
that
$$
\limsup_{t\to+\infty}\|u(t,\cdot)\|_X\le c(|B_r^-|+|B_r^+|).
$$
Since $B_r^\pm\to 0$ as $r\to\infty$, the latter relation implies the desired decay property
$$
\lim_{t\to+\infty}\|u(t,\cdot)\|_X=0.
$$

\section{Acknowledgements}
This work was supported by the Ministry of Science and Higher Education of the Russian  Federation (project no. 1.445.2016/1.4) and by the Russian Foundation for Basic Research (grant 18-01-00472-a.)

\end{document}